\numberwithin{equation}{section}
\theoremstyle{definition}
\newtheorem*{definition}{Definition}
\theoremstyle{definition}
\newtheorem*{ntn}{Notation}
\theoremstyle{plain}
\newtheorem{theorem}{Theorem}[section]
\newtheorem{lemma}[theorem]{Lemma}
\newtheorem{Prop}[theorem]{Proposition}
\newcommand{\beas}{\begin{eqnarray*}}
\newcommand{\eeas}{\end{eqnarray*}}
\newcommand{\bes} {\begin{equation*}}
\newcommand{\ees} {\end{equation*}}
\newcommand{\be} {\begin{equation}}
\newcommand{\ee} {\end{equation}}
\newcommand{\bea} {\begin{eqnarray}}
\newcommand{\eea} {\end{eqnarray}}
\newcommand{\eps}{\varepsilon}
\newcommand{\de} {\delta}
\newcommand{\bdy}{\partial}
\newcommand{\Om}{\Omega}
\newcommand{\D}{\mathbb{D}}
\newcommand{\hol}{\mathcal{O}}
\newcommand{\ima}{\operatorname{Im}}
\newcommand{\vol}{\operatorname{vol}}
\newcommand\wt[1]{\widetilde{#1}}
\newcommand{\Bn} {\mathbb{B}^n}
\newcommand{\Cn}{\mathbb{C}^n}
\newcommand{\C} {\mathbb{C}} 
\newcommand{\rl}{\mathbb{R}}
\newcommand{\Rn} {\mathbb{R}^{n}}
\definecolor{ao(english)}{rgb}{0.0, 0.5, 0.0}
\begin{document}
\title[ Floating Bodies as Approximations of Bergman Sublevel Sets]{Convex Floating Bodies as Approximations of Bergman Sublevel Sets on Tube Domains}

\author{Purvi Gupta}
\address{Department of Mathematics, University of Western Ontario, London, Ontario N6A 5B7, Canada}
\email{pgupta45@uwo.ca}
\subjclass{32A07, 32A25, 52A23}
\keywords{Tube domains, floating body, equiaffine invariant measures}
\maketitle
\section{Introduction}

The main objective of this paper is to establish a quantitative relationship between two collections of geometric objects associated with a given convex body in $\Rn$. One, its set of convex floating bodies --- an equiaffine-invariant construction studied by convex geometers, and the other, the collection of sublevel sets traced by the Bergman kernel of a tube domain over the given body. The latter is a natural object in complex analysis. Although, the bridge between convex and complex analysis on such domains has been exploited succesfully before --- Nazarov's paper \cite{Na12} is a noteworthy example --- the role of floating bodies in this interplay is yet to be explored.  Before we state our main result, we describe the central objects of this paper in some detail.
 
Let $D\subset\Rn$ be a bounded convex domain. For $\de>0$, its {\em convex floating body} $D_\de$ is the intersection of all the half-spaces whose defining hyperplanes cut off a set of volume $\de$ from $D$. Specifically, if $A$ denotes the set of all $(v,t)\in\Rn\times\Rn$ such that $\vol\{x\in D:x\cdot v\geq  t\}=\de$, then 
	\be\label{def_float}
		D_\de=\bigcap_{(v,t)\in A}\{x\in\Rn:x\cdot v< t\}.
	\ee
These are strictly convex and exhaust $D$ as $\de$ approaches zero. Inspired by a construction due to Dupin, these were first introduced by Sch{\"u}tt and Werner (in \cite{ScWe90}) as a tool for extending the notion of Blaschke's surface area measure to nonsmooth convex boundaries. Since its introduction, the floating body has made appearances in the context of polyhedral approximations (see \cite{Sc91}), the homethety conjecture (see \cite{ScWe94} and \cite{St06}) and, more recently, the hyperplane conjecture (in \cite{Fr12}). 

Now, let $\Om:=\{x+iy\in\Cn:y\in D\}$. Then, $\Om$ is a pseudoconvex tube domain in $\Cn$. The Bergman kernel of $\Om$, $K_\Om:\Om\times\Om\rightarrow\C$, is the reproducing kernel of the Bergman space $A(\Om)$ --- i.e., the space of Lebesgue square-integrable holomorpic functions on $\Om$, with the $L^2$-norm. It is known that $A(\Om)$ is nonempty, consists of Fourier-Laplace transforms of certain functions on $\Rn$, and 
	\be\label{eq_berg}
		K_\Om(z,w)=\frac{1}{(2\pi)^n}\int_{\Rn} \frac{e^{i(z-\overline w)\cdot t}}{\int_De^{-2x\cdot t}d\mu(x)}d\mu(t),
	\ee
where $\mu$ denotes the Lebesgue measure on $\Rn$ (see \cite{Sa88}, and references therein). Estimates for the Bergman kernel and associated quantities are of great interest to complex analysts and are the subject of many works. We will focus on the sets 
	\be\label{def_berg}
		D^M:=\{x\in D: K_D(x):=K_\Om(ix,ix)< M\}.
	\ee
These are strongly convex (this follows from the strict plurisubharmonicity of $\log K_\Om(z,z)$) and exhaust $D$ as $M\rightarrow\infty$ (as discussed in Section \ref{sec_proofs}). Although \eqref{eq_berg} gives a formula for $K_D(x)$, it can be hard to compute, even for some very simple examples (such as planar triangles). On the other hand, the convex floating bodies are simpler to construct and visualize. This is part of our motivation for establishing the following relation:

\begin{theorem}\label{thm_main}Let $D\subseteq\Rn$ be a bounded convex domain. Let $D_\de$ and $D^M$ be the $\de$-convex floating body and the Bergman $M$-sublevel set of $D$, respectively (see \eqref{def_float} and \eqref{def_berg}). Then, there exist dimensional constants $\ell_n>0$ and $u_n>0$ such that
	\bes 
    		D^{\ell_n\de^{-2}}\subseteq D_\de\subseteq D^{u_n\de^{-2}}
    \ees
for small enough $\de$.
\end{theorem}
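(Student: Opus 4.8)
The plan is to reduce Theorem~\ref{thm_main} to a \emph{pointwise} comparison between $K_D$ and a convex‑geometric quantity. For $x\in D$ set
\[
    \varphi_D(x):=\min_{v\in S^{n-1}}\vol\big(\{x'\in D:(x'-x)\bcdot v\ge 0\}\big),
\]
the least volume of a cap of $D$ through $x$. Unwinding \eqref{def_float}: a point $x$ lies in some $\de$‑cap of $D$ iff some cap through $x$ has volume $\le\de$ (shrink or expand the bounding hyperplane), and the infimum defining $\varphi_D(x)$ is attained at the cap whose bounding hyperplane passes through $x$; hence $D_\de=\{x\in D:\varphi_D(x)>\de\}$ for every $\de\in(0,\vol D)$ (empty otherwise, which is why $\de$ must be small). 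Since $D^M=\{x:K_D(x)<M\}$, both inclusions of the theorem follow immediately from the estimate
\[
    \ell_n\,\varphi_D(x)^{-2}\ \le\ K_D(x)\ \le\ u_n\,\varphi_D(x)^{-2}\qquad(x\in D),
\]
with the $\ell_n,u_n$ of the statement. So everything reduces to this two‑sided bound.

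Next I would exploit equiaffine invariance. A real map $x\mapsto Ax$ ($A\in GL_n(\rl)$) extends to a biholomorphism of the corresponding tube domains, so $K_{AD}(Ax)=|\det A|^{-2}K_D(x)$, while $\varphi_{AD}(Ax)=|\det A|\,\varphi_D(x)$; hence $K_D(x)\varphi_D(x)^2$ is an affine invariant. Translating $x$ to $0$ and using \eqref{eq_berg},
\[
    K_D(0)=\frac{1}{(4\pi)^n}\int_{\Rn}\frac{ds}{F(s)},\qquad F(s):=\int_D e^{-y\bcdot s}\,d\mu(y);
\]
$F$ is smooth, strictly convex, and $F(s)\to\infty$ as $|s|\to\infty$, so it has a unique minimizer $s^*$, and the log‑concave probability measure $d\mu_{s^*}:=F(s^*)^{-1}e^{-y\bcdot s^*}\mathbf 1_D\,d\mu$ has barycenter $0$ (that is what $\nabla F(s^*)=0$ says). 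Applying the linear map $\operatorname{Cov}(\mu_{s^*})^{-1/2}$, I may assume $\operatorname{Cov}(\mu_{s^*})=\mathrm{I}$; by affine invariance it then suffices to show $K_D(0)\asymp_n 1$ and $\varphi_D(0)\asymp_n 1$ (comparability up to constants depending only on $n$).

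The common engine is a Brunn--Minkowski cap comparison: for a unit $v$ and $0<\eta\le h_D(-v)$ (with $h_D$ the support function about $0$), scaling the half $\{y\in D:y\bcdot v\le0\}$ toward the cap's apex gives
\[
    \vol\big(\{y\in D:\,-y\bcdot v\ge h_D(-v)-\eta\}\big)\ \ge\ \big(\tfrac{\eta}{h_D(-v)}\big)^{n}\,\varphi_D(0).
\]
Combined with the trivial bound $F(s)\ge\vol(\{y\in D:y\bcdot s\le0\})\ge\varphi_D(0)$, this controls the tail of $\int ds/F(s)$: splitting at $|s|\sim 1$, on $|s|\lesssim1$ one has $F\ge F(s^*)$, and on $|s|\gtrsim1$ the cap bound forces $F$ to grow exponentially (using that the support of an isotropic log‑concave measure contains a ball of radius $\gtrsim_n1$ about $0$), yielding $K_D(0)\lesssim_n 1/F(s^*)$. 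For the reverse, the variational identity $K_D(0)=\sup_\phi|\int\phi|^2\big/\big((4\pi)^n\int|\phi|^2F\big)$ with $\phi=\mathbf 1_{B(s^*,c_0)}$, $c_0$ a small absolute constant, gives $K_D(0)\gtrsim_n1/F(s^*)$, because on that ball $F(s)/F(s^*)=\mathbb E_{\mu_{s^*}}[e^{-\langle y,s-s^*\rangle}]\le 2$ — one‑dimensional marginals of an isotropic log‑concave measure have bounded exponential moments near $0$. So $K_D(0)\asymp_n 1/F(s^*)$, and similarly, using $\|\rho_{\mu_{s^*}}\|_\infty=e^{h_D(-s^*)}/F(s^*)$ together with the (dimension‑dependent) bounded‑isotropic‑constant estimate $\|\rho_{\mu_{s^*}}\|_\infty\asymp_n1$ and Grünbaum's inequality for the centered log‑concave $\mu_{s^*}$, one pins down $\varphi_D(0)\asymp_n F(s^*)\asymp_n e^{\,h_D(-s^*)}$.

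The main obstacle is precisely the bound $h_D(-s^*)=O_n(1)$ — geometrically, after tilting $D$ by $e^{-\langle y,s^*\rangle}$ to recentre it at $0$, $D$ does not extend far in the direction opposite to the tilt — and, with it, making every comparison above yield genuine dimensional constants. The cap estimate and Grünbaum relate $\varphi_D(0)$, $F(s^*)$ and $e^{h_D(-s^*)}$ to one another but, used naïvely, only up to the very factor $e^{h_D(-s^*)}$ one is trying to bound, so one must feed in the extra constraint that the marginals of $\mu_{s^*}$ arise from a convex body (their level‑set areas are power‑concave). An alternative I would try in parallel, likely cleaner to cite, routes everything through the polar body: prove $K_D(x)\asymp_n\vol\big((D-x)^\circ\big)^{2}$ directly from the Laplace formula (the cap comparison bounds $F$ below, and $\int_{S^{n-1}}h_D(\omega)^{-n}\,d\omega=n\vol(D^\circ)$ turns this into the polar volume), and combine it with the affinely invariant Santaló‑type relation $\varphi_D(x)\cdot\vol\big((D-x)^\circ\big)\asymp_n1$, whose upper half is elementary and whose lower half is a Bourgain--Milman‑type inequality centred at an arbitrary interior point.
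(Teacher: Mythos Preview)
Your reduction to the pointwise estimate $\ell_n\,\varphi_D(x)^{-2}\le K_D(x)\le u_n\,\varphi_D(x)^{-2}$ is correct, and the identification $D_\de=\{x\in D:\varphi_D(x)>\de\}$ is valid. But the argument you outline for this estimate is not complete, and you say so yourself: the bound $h_D(-s^*)=O_n(1)$ is the missing piece, and without it your chain of comparisons is circular (each of $\varphi_D(0)$, $F(s^*)$, $e^{h_D(-s^*)}$ is related to the others only up to that same factor). Your fallback route --- proving $\varphi_D(x)\cdot\vol\big((D-x)^\circ\big)\asymp_n1$ at an \emph{arbitrary} interior point $x$ --- is a nontrivial statement in its own right; the lower half is \emph{not} the classical Bourgain--Milman inequality (which concerns the Santal\'o point or centroid, where $\varphi_D$ is comparable to $\vol(D)$), and you give no argument for it. As written this is a research programme, not a proof.

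The paper sidesteps exactly this obstacle by \emph{not} attempting the full pointwise upper bound. For $D_\de\subseteq D^{u_n\de^{-2}}$ it estimates $K_D$ only at the centres $c_v$ of the L\"owner--John ellipsoids of the $\de$-caps $D|_v$; at such a centre one can invoke Nazarov's bound $K_E(0)\le n!\,\vol(E^\circ)/(\pi^n\vol(E))$ for the origin-symmetric ellipsoid, combine it with Blaschke--Santal\'o and John's $1/n$-shrinking, and get $K_D(c_v)\le u_n\de^{-2}$ by monotonicity. The set $\{c_v:v\in S^{n-1}\}$ is then shown, via a homotopy to the barycentres of $H_v\cap D$ (which trace $\bdy D_\de$), to surround $D_\de$, and the strong convexity of $\log K_D$ finishes the inclusion by the maximum principle. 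For $D^{\ell_n\de^{-2}}\subseteq D_\de$ the paper bounds $K_D$ from below at any $x\in H_v\cap D$ using B\l{}ocki's inequality $K_\Om(w,w)\ge 1/\vol_{\Cn}(I_\Om(w))$ together with a Schwarz-lemma argument that traps the Kobayashi indicatrix of the tube inside a product of the cap $D|_v$ with its reflection; this yields $K_D(x)\ge 4^{-(n+1)}\de^{-2}$ directly, with no asymptotic convex geometry required.

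So: your approach, if it could be completed, would give the stronger pointwise comparison $K_D\asymp_n\varphi_D^{-2}$ and explain it through isotropic-constant machinery, but it leans on inputs (off-centre Bourgain--Milman, control of the tilt $s^*$) you have not supplied. The paper's proof is more modest but self-contained: it trades the pointwise upper bound for a maximum-principle argument that only needs $K_D$ to be small on \emph{some} family of points surrounding $D_\de$, and handles the lower bound with a concrete indicatrix estimate rather than volume-product inequalities.
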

Another reason to compare these two collections is their suitability for the following scheme. Suppose $G$ is a group of volume-preserving transformations that acts on $\Rn$ (or $\Cn$), and $D\subset\Rn$ (or $\Cn$) is a bounded domain. If $\{D(\eps)\}_{\eps>0}$ is a $G$-invariant collection of exhausting subsets of $D$, and 
	\bes
		\vol(D\setminus D(\eps))\sim f(\eps)\qquad\ \text{as}\ \eps\rightarrow 0,
	\ees
 for some continuous $f$ with $f(0)=0$, then the weak-$*$ limit (if it exists) of $f(\eps)^{-1}$ times the Lebesgue measure on $D\setminus D(\eps)$ yields a $G$-invariant measure on $\bdy D$. If $D$ is strongly convex and $D(\eps)$ is chosen as the convex floating body $D_\eps$, then this measure is the normalized affine surface area measure on $\bdy D$ (this is implicit in Sch{\"u}tt and Werner's paper \cite{ScWe90}). For other convex domains, the floating bodies can lead to `lower-dimensional' affine measures (for instance, this measure is supported on the vertices in the case of polygons --- see \cite{Sc91}). If the above scheme is carried out for a strongly pseudoconvex domain $\Om\Subset\Cn$, using the Bergman sublevel sets, then one obtains the normalized Fefferman hypersurface measure on $\bdy\Om$ (see \cite[Prop. 1.5]{Gu15}). If $D$ is strongly convex, the tube domain $\Om:=\Rn+iD$ is strongly pseudoconvex, and the Fefferman measure on $\bdy\Om$ reduces to the affine measure along $\bdy D$. It follows that if $D(\eps)$ is set as the Bergman sublevel set $D^{1/\eps}$, then, again we obtain the normalized affine surface area measure on $\bdy D$. Theorem \ref{thm_main} implies that, analogous to strongly convex domains, the two competing classes $\{D_\eps\}$ and $\{D^{1/\eps}\}$ will yield comparable equiaffine-invariant measures on $\bdy D$ for a general convex domain $D\subset\Rn$. This is surprising since in the absence of strong convexity, we do not have any Sch{\"u}tt-Werner or H{\"o}rmander-type estimates relating these sets to the curvature of $\bdy D$ (the estimates referrred to are used in the proof of Proposition \ref{prop_strcvx}). 

The rest of the article is organized as follows. We provide a proof of Theorem \ref{thm_main} in the next section. The constants $\ell_n$ and $u_n$ are computed therein. In Section \ref{sec_exam}, we set up a new affine-invariant constant associated to a convex body, and compute it for some examples. At the end, we indicate some possible avenues of future exploration.   

\noindent {\bf Acknowledgement.}	The author would like to thank David Barrett who encouraged her to explore this problem in the context of tube domains, and supported this work with lots of feedback. 

\section{Proof of Theorem \ref{thm_main}}\label{sec_proofs}

\begin{ntn}
We first clarify some notation that will appear throughout the rest of this article. We use $\mathbb{B}^n$ and $\omega_n$ to denote the unit Euclidean ball and its volume, respectively, in $\Rn$. The unit disc in $\C$ is written as $\D$. The space of holomorphic maps from $D_1$ to $D_2$ is denoted by $\hol(D_1;D_2)$. For complex-valued $n$-tuples $a=(a_1,...,a_n)$ and $b=(b_1,...,b_n)$, $a\cdot b=a_1b_1+\cdots+a_nb_n$.   
\end{ntn}

We now briefly argue the fact that the Bergman sublevel sets $\{D^M\}_{M>0}$ exhaust $D$. Although, this is not necessary for our main proof, it is an essential feature of the comparison we are making between $\{D_\eps\}_{\eps>0}$ and $\{D^M\}_{M>0}$. 
\begin{lemma}
Let $D\subset\Rn$ be a bounded convex domain. Then, for any $x_0\in\bdy\Om$, $K_D(x)\rightarrow\infty$ as $x\rightarrow x_0$.
\end{lemma}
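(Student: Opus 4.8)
The plan is to show that the Bergman kernel $K_D(x) = K_\Omega(ix,ix)$ blows up near the boundary by exhibiting, for each boundary point, a single test function in $A(\Omega)$ whose normalized evaluation at $ix$ tends to infinity, and then invoke the extremal characterization $K_\Omega(z,z) = \sup\{|f(z)|^2 : f \in A(\Omega), \|f\|_{L^2} \le 1\}$. Equivalently, one can work directly from the Fourier-Laplace formula \eqref{eq_berg}: at a diagonal point this reads $K_D(x) = (2\pi)^{-n}\int_{\rl^n} e^{-2x\cdot t}\big(\int_D e^{-2s\cdot t}d\mu(s)\big)^{-1}d\mu(t)$, so it suffices to produce a direction $t$ (or a cone of directions) along which the integrand is large and whose contribution to the integral diverges as $x\to x_0\in\bdy D$.

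First I would reduce to a one-variable-type estimate by choosing, for a given $x_0\in\bdy D$, a supporting hyperplane: pick a unit vector $v$ with $s\cdot v \le c := x_0\cdot v$ for all $s\in D$ and with $x\cdot v \uparrow c$ as $x\to x_0$. Then for the ray of directions $t = \tau v$, $\tau > 0$, we have $\int_D e^{-2\tau s\cdot v}d\mu(s) \ge \int_D e^{-2\tau c}\,d\mu(s)$ is not quite the right direction — rather, since $s\cdot v \le c$, we get $e^{-2\tau s\cdot v} \ge e^{-2\tau c}$, so the denominator is $\ge \vol(D) e^{-2\tau c}$; that only gives an upper bound. To get a \emph{lower} bound on $K_D$ I instead need an \emph{upper} bound on the denominator $J(t):=\int_D e^{-2s\cdot t}d\mu(s)$. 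Writing $d = \operatorname{diam}(D)$ and using $s\cdot v \ge c - d$ on $D$, we get $J(\tau v) \le \vol(D)\, e^{-2\tau(c-d)}$, hence the integrand at $t=\tau v$ is $\ge \vol(D)^{-1} e^{-2\tau(x\cdot v)} e^{2\tau(c-d)} = \vol(D)^{-1} e^{2\tau d} e^{-2\tau(c - x\cdot v)}$. Restricting the $t$-integral to a small solid cone $\Gamma$ around $v$ of fixed aperture and integrating over $\tau \in (R,\infty)$ for suitable $R$, the exponent $2\tau d - 2\tau(c - x\cdot v)$ stays positive and large once $c - x\cdot v$ is small, so the integral over this cone diverges as $x \to x_0$.

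The key steps, in order: (1) fix $x_0 \in \bdy D$ and a supporting functional $v$, noting $\eta(x) := c - x\cdot v \to 0^+$ as $x\to x_0$ with $\eta > 0$ on $D$; (2) bound $J(t) \le \vol(D) e^{-2(c-d)(t\cdot v)}$ — more carefully, over a narrow cone $\Gamma = \{t : t\cdot v \ge (1-\epsilon)|t|\}$ one still has $s\cdot t \ge (c-d)|t| \cdot(\text{const})$, giving $J(t) \le \vol(D) e^{-\kappa|t|}$ for some $\kappa = \kappa(D,\epsilon) > 0$ and all $t\in\Gamma$; (3) restrict the defining integral for $K_D(x)$ to $t\in\Gamma$ and bound below: $K_D(x) \ge (2\pi)^{-n}\vol(D)^{-1}\int_{\Gamma} e^{-2x\cdot t} e^{\kappa|t|}d\mu(t) \ge (2\pi)^{-n}\vol(D)^{-1}\int_{\Gamma} e^{\kappa|t| - 2|t|\eta(x) - 2\lambda|t|}d\mu(t)$ where $\lambda$ accounts for the component of $x$ orthogonal to $v$ being bounded; (4) once $\eta(x)$ is small enough that $\kappa - 2\eta(x) - 2\lambda$... this needs care because $\lambda$ is a fixed constant, not small.

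The genuine obstacle is exactly this last point: a crude supporting-hyperplane estimate controls only the $v$-component of $x$, and the transverse components contribute a fixed exponential growth $e^{2|t|\lambda}$ in the numerator that the bound $e^{\kappa|t|}$ on $1/J$ must beat, where $\kappa$ depends on how narrow the cone is. I would resolve this by \emph{translating coordinates} so that $x_0$ is moved close to the origin and by choosing the cone aperture and the parameter $\kappa$ after fixing a compact neighborhood of $x_0$; more robustly, I would use the sharper bound $J(t) \le \int_{D} e^{-2s\cdot t}d\mu(s)$ with the full geometry of $D$ near its supporting hyperplane at $x_0$, namely that for $t$ in a cone around the inward normal $-v$, the mass of $D$ in the region $\{s : s\cdot v \le c - \rho\}$ forces $J(t) \le C e^{-2\rho(t\cdot v)}\vol(D)$ with $\rho$ a \emph{fixed} positive number (half the diameter suffices), so the competition is between $e^{2\rho(t\cdot v)}$ in the denominator's reciprocal and $e^{2\eta(x)(t\cdot v)+2\lambda|t|}$ in the numerator — and by taking the cone around $-v$ narrow enough that $|t| \le (1+\epsilon)(t\cdot v)$ and simultaneously $\lambda \le \rho/2$ by first shifting $x_0$ to the origin (so the transverse part of $x$ near $x_0$ is as small as we like), the net exponent $2(\rho - \eta(x) - \lambda)(t\cdot v)$ is bounded below by a positive multiple of $|t|$, making $\int_\Gamma$ diverge. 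Cleaning up this bookkeeping — choosing the order of quantifiers for the cone, $\rho$, the neighborhood of $x_0$, and $\eta$ — is the main thing to get right; the rest is routine.
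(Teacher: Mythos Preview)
Your approach via the integral formula \eqref{eq_berg} is different from the paper's---the paper compares $K_D$ with the diagonal Bergman kernel of the Reinhardt domain $R=\{z:(\log|z_1|,\dots,\log|z_n|)\in D\}$ using Fu's summation formula, and then cites that $R$ is hyperconvex and that hyperconvex domains have Bergman kernel blowing up at the boundary (Ohsawa). Your direct route is more elementary and can be made to work, but not as written: there is a sign error and a conceptual slip. The sign: $e^{-2\tau(x\cdot v)}e^{2\tau(c-d)}=e^{-2\tau d}e^{+2\tau\eta(x)}$, not $e^{2\tau d}e^{-2\tau\eta(x)}$; with the correct sign the exponent is $\approx -2\tau d<0$ and your lower bound on the integrand \emph{decays} along $t=\tau v$. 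More fundamentally, for every interior $x$ the full $t$-integral equals $K_D(x)<\infty$, so no restriction of it can ``diverge''; what you actually need is that this finite value tends to $\infty$ as $\eta(x)\to 0$. The same issue recurs in your second attempt: you cannot arrange a net exponent bounded below by a positive multiple of $|t|$ on any cone, because after translating $x_0$ to the origin one has $\min_{s\in\overline D}(s\cdot u)=0$ for the inward normal $u$, so $J$ decays only polynomially in that direction and the integrand genuinely decays (just slowly) for interior $x$.

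The clean fix is to abandon cones and exponential competition. Translate so that $x_0=0$ and $D\subset\{s:s_n>0\}$, and restrict the $t$-integral to the slab $\{t:|t'|\le 1,\ t_n>0\}$. On this slab $s_nt_n>0$ and $|s'\cdot t'|\le R:=\sup_{s\in D}|s'|$, so $J(t)\le e^{2R}\vol(D)$ is simply \emph{bounded}; hence the integrand is at least $e^{-2|x'|-2R}\vol(D)^{-1}e^{-2x_nt_n}$, and integrating over the slab yields $K_D(x)\ge C/x_n$ with $C>0$ depending only on $D$. Since $x_n\to 0$ as $x\to x_0$, this gives the blow-up directly, with none of the bookkeeping about apertures, $\rho$, or order of quantifiers that you were worried about.
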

\begin{proof} Let $R:=\{(z_1,...,z_n):(\log|z_1|,...,\log|z_n|)\in D\}$. As $D$ is a bounded convex domain, $R$ is a bounded pseudoconvex Reinhardt domain in $\Cn$ that satisfies the Fu condition --- i.e., it does not intersect any complex hyperplane of the form $\{(z_1,...,z_n)\in\Cn:z_j=0\}$. Thus, $R$ is hyperconvex, and $K_R(z,z)\rightarrow\infty$ as $z\mapsto z_0$, for any $z_0\in\bdy R$ (these are results from \cite{Zw} and \cite{Oh84}, respectively). Now, by Theorem $2$ and estimate $(7)$ in Fu's paper \cite{Fu01},
	\beas
		K_R\big((e^{x_1},...,e^{x_n}),(e^{x_1},...,e^{x_n})\big)={e^{-2(x_1+\cdots+x_n)}}\sum_{k\in\mathbb{Z}^n}K_{\Rn+iD}(ix,ix+2k\pi)\\
		\leq CK_{\Rn+iD}(ix,ix)\sum_{\substack{k\in\mathbb{Z}^n,\\ k\neq(0,\cdots,0)}}\frac{1}{|k|^2},
	\eeas
for $x=(x_1,...,x_n)\in D$, and some constant $C$ independent of $x$. Thus, $K_D(x)\geq \wt C K_R\big((e^{x_1},...,e^{x_n}),(e^{x_1},...,e^{x_n})\big)$, where $\wt C$ is independent of $x$. Combining this with the hyperconvexity of $R$, we get the desired result. 
\end{proof}

We now proceed to the proof of our main theorem. We rely on Nazarov's approach from \cite{Na12}, the main source of challenge being the lack of any symmetry assumptions on $D$.

\noindent{\em Proof of Theorem \ref{thm_main}}. Let $D$ be a bounded convex domain in $\Rn$. We first establish the existence of $u_n.$ For this, we repeat an estimate due to Nazarov (see \cite[Section~3]{Na12}). Let $E\subset\Rn$ be an origin-symmetric convex body. One uses formula \eqref{eq_berg} to write
	\be\label{eq_bergsym}
		K_E(0)=\frac{1}{(2\pi)^n}\int_{\Rn}\frac{1}{J_E(t)}d\mu(t)
	\ee
where 
	\bes
		J_E(t)=\int_Ee^{-2x\cdot t}d\mu(x).
	\ees
Fix a $y\in E$. Then, $E^y:=\frac{1}{2}(y+E)\subseteq E$. So, we obtain
	\bea		
		J_E(t)\geq \int_{E^y}e^{-2x\cdot t}d\mu(x)&=&2^{-n}\int_Ee^{-2(\frac{v+y}{2})\cdot t}d\mu(v)\notag \\
			&=&2^{-n}e^{-y\cdot t}\int_Ee^{-v\cdot t}d\mu(v)\notag \\
			&\geq & 2^{-n}e^{-y\cdot t}\vol(E),\label{eq_naz_ineq}
	\eea	
where we use the convexity of $v\mapsto e^{-v\cdot t}$ on $E$ for every $t$, and the observation that any convex function $f$ on $E$ satisfies 
\bes 
	\int_Ef(x)d\mu(x)\geq f(0)\vol(E)
\ees
by the symmetry of $E$. Next, recall that the polar body of $E$ is given by $E^\circ=\{y\in\Rn:x\cdot y\leq 1\ \text{for all}\ x\in E\}$, and 
	\beas	
		||x||_{E^\circ}&:=&\min\{\alpha>0:x\in\alpha E^\circ\}\\	
							&=&\max\{x\cdot y:y\in E\}.
	\eeas
So, maximizing \eqref{eq_naz_ineq} over all $y\in E$, we obtain that 
	\bes
		J_E(t)\geq 2^{-n}e^{||-t||_{E^\circ}}\vol(E)=2^{-n}e^{||t||_{E^\circ}}\vol(E),
	\ees
for all $t\in\Rn$. Substituting this back in \eqref{eq_bergsym}, we see that 
	\bea
		K_E(0)&\leq& \frac{1}{\pi^n\vol(E)}\int_{\Rn}e^{-||t||_{E^\circ}}d\mu(t)\notag \\
				&=& \frac{1}{\pi^n\vol(E)}\int_{\Rn}\int_{s\geq {||t||_{E^\circ}}}e^{-s}ds\: d\mu(t)\notag\\	
				&=&\frac{1}{\pi^n\vol(E)}\int_0^\infty e^{-s}\int_{\{t\in\Rn:||t||_{E^\circ}\leq s\}}d\mu(t)\:ds\notag\\
				&=&\frac{\vol(E^\circ)}{\pi^n\vol(E)}	\int_0^\infty s^ne^{-s}ds=\frac{n!\vol(E^\circ)}{\pi^n\vol(E)}.
				\label{eq_nazupp}
	\eea

Now, we return to $D$. Fix a positive $\de<<\vol(D)$. For each $v\in S^{n-1}$, let $r_v$ denote the unique real number such that 
	\bes 
    		\vol(\{x\in D:x\cdot v > r_v\})=\de.
    \ees
Set  $H_v:=\{x\in\Rn:x\cdot v=r_v\}$ and $D|_v:=\{x\in D:x\cdot v >r_v\}$. $D|_v$ is a continuous family of convex domains in $D$, each of volume $\de$. We let $E_v$ denote the circumscribed L{\"o}wner-John ellipsoid of $D|_v$ --- i.e., the unique ellipsoid of minimal volume that contains $D|_v$ (see \cite[Lecture 3]{Ba97}, for more on L{\"o}wner-John ellipsoids). Then, due to a result by F. John (\cite{Jo2014}), if 
	\bes
		E_v=c_v+A_v(\Bn),
      \ees
for some $A_v\in \operatorname{GL}(n;\rl)$, then on shrinking,
	\bes
		E_v^n:=c_v+\frac{1}{n}A_v(\mathbb{B}_n)\subseteq D|_v.
	\ees
In particular, for every $v\in S^{n-1}$, 
	\be\label{eq_vol}
    	\vol(E^n_v)=\frac{1}{n^n}\vol(E_v)\geq\frac{1}{n^n}\vol(D|_v)=\frac{\de}{n^n}.
    \ee

We now estimate the Bergman kernel of $D$ at each $c_v$. We first observe that since the Bergman kernel is invariant under translations, $K_{\frac{1}{n}A_v(\Bn)}(0)=K_{E^n_v}(c_v)$ for each $v\in S^{n-1}$. But, since $\frac{1}{n}A_v(\Bn)$ is an origin-symmetric convex domain in $\Rn$, we get by \eqref{eq_nazupp} that
		\be\label{eq_naz} 	
    	K_{E^n_v}(c_v)=K_{\frac{1}{n}A_v(\Bn)}(0)\leq \frac{n!\vol\Big(\left(\frac{1}{n}A_v(\Bn)\right)^\circ\Big)}{\pi^n\vol\Big(\frac{1}{n}A_v(\Bn)\Big)}.
    \ee 
This can be combined with the Blaschke-Santal{\'o} inequality for origin-symmetric convex bodies:
	\bes 
    	\vol(D^\circ)\vol(D)\leq (\omega_n)^2,
     \ees 
and \eqref{eq_vol}, to obtain that 
	\bes 
		K_{E^n_v}(c_v)\leq\frac{n!(\omega_n)^2}{\pi^n\vol\left(\tfrac{1}{n}A_v(\Bn)\right)^{2}}= \frac{n!(\omega_n)^2}{\pi^n\vol(E^{n}_v)^{2}}
			\leq \frac{n!n^{2n}(\omega_n)^2}{\pi^n\de^2}.
		\ees
Since $c_v\in E^n_v\subseteq D|_v\subset D$, by the monotonicity of the Bergman kernel, 
      	\be\label{eq_mon}
        	 K_D(c_v)\leq K_{E^n_v}(c_v)
				\leq u_n\de^{-2} ,\qquad \text{for every}\ v\in S^{n-1},
          \ee
where $u_n:=\dfrac{n!n^{2n}(\omega_n)^2}{\pi^n}$. 

Now, we claim that the image of the map $\gamma:S^{n-1}\rightarrow D\setminus D_\de$ given by $v\mapsto c_v$ `surrounds' $D_\de$ --- i.e., $D_\de$ is contained in an open set $U$ such that $\bdy U\subseteq\gamma(S^{n-1})$. 
Our argument is as follows. Let $b_v$ denote the barycenter of $H_v\cap D$. Then, by Lemma $2$ in \cite{ScWe94}, every $x\in\bdy D_\de$ coinicides with a $b_v$ for some $v\in S^{n-1}$. Thus, the image of the map $\beta:S^{n-1}\mapsto D\setminus D_\de$ given by $v\mapsto b_v$ surrounds $D_\de$ (in the sense described above --- in fact, $U=D_\de$ in this case). Now, $T:S^{n-1}\times[0,1]\mapsto D\setminus D_\de$ given by $(v,t)\mapsto (1-t)b_v+tc_v$ is a homotopy between $\beta(S^{n-1})$ and $\gamma(S^{n-1})$ whose image is entirely contained in the complement of $D_\de$. Thus, $\gamma(S^{n-1})$ must surround $D_\de$ as well, and there is an open set $U\subset D$, such that $U\supseteq D_\de$ and $\bdy U\subseteq \gamma(S^{n-1})$. Thus, by the maximum principle ($x\mapsto \log K_D(x)$ is strongly convex on $D$),
	\bes 
		\sup_{x\in D_\de}K_D(x)\leq\sup_{y\in U} K_D(y)\leq\sup_{y\in \bdy U} K_D(y)\leq \sup_{v\in S^{n-1}}K_D(c_v)\leq \frac{u_n}{\de^2}.
	\ees
This shows that $D_\de\subset D^{u_n\de^{-2}}$.

We now turn to the existence of $\ell_n$. Once again, we fix $\de$ so small that $D_\de$ is nonempty, and $H_v$ is as before. It suffices to show that for any $x\in H_v\cap D$, $K_D(x)\geq \ell_n\de^{-2}$ for some $\ell_n>0$ independent of $v$, $\de$ and $D$. This is because for any $x\in\bdy D_\de$, there is a supporting hyperplane of $D_\de$ that cuts off a set of volume $\de$ from $D$ --- i.e., there is a $v\in S^{n-1}$ such that $x\in H_v\cap D$ (see Lemma $2$ in \cite{ScWe94}). The required estimate will be obtained from the following lower bound for convex domains due to B{\l}ocki in \cite{Bl14}:
	\be\label{eq_kob}
		K_\Om(w,w)\geq \frac{1}{\vol_{\Cn}(I_\Om(w))}, \qquad w\in\Om,
	\ee
where $I_\Om(w)\subset\Cn$ is the Kobayashi indicatrix of $\Om$ given by 
	\bes
		I_\Om(w)=\{\phi'(0):\phi\in\hol(\D;\Om),\phi(0)=w\}.
	\ees
For us, $\Om:=\Rn+iD$, and $w=ix$ for some $x\in H_v\cap D$. We are seeking an upper bound on $\vol_{\Cn}(I_\Om(w))$.  

Without loss of generality, we assume that $x$ is the origin in $\Rn$ and $v=(0,...,0,1)$. In particular, $H_v$ is the hyperplane $\{(x_1,...,x_n)\in\Rn:x_n=0\}$ and $D\cap\{x_n>0\}=D|_v$. We will follow Nazarov's technique from \cite{Na12} (as used by B{\l}ocki in \cite{Bl14}). We recall that $D^\circ=\{u\in\Rn:y\cdot u\leq 1\ \text{for all}\ y\in D\}$, which is the same as $\{u\in\Rn:y\cdot u< 1\ \text{for all}\ y\in D\}$ since $D$ is open. Now, consider the half-plane $S:=\{z\in \C:\ima z< 1\}$, and let $\Phi:S\mapsto \D$ denote the conformal map $z\mapsto-iz/{(z-2i)}$. Then, $\Phi(0)=0$ and $\Phi '(0)=1/2$. For a fixed $u\in D^\circ$ and any $\phi\in\hol(\D;\Om)$ such that $\phi(0)=w$, the map $F:z\mapsto \Phi(\phi(z)\cdot u)$ is a holomorphic self-map of $\D$ that fixes the origin (since we are assuming that $w$ is the origin in $\Cn$). Thus, by the Schwarz lemma, $|F'(0)|\leq 1$, or $|\phi '(0)\cdot u|\leq 2$. So, $\frac{1}{2}I_\Om(w)\subseteq D_\C$, where
	\bes
		D_\C:=\{z\in\Cn:|z\cdot u|\leq 1\ \text{for all}\ u\in D^\circ\}. 
	\ees 
Note that $D_\C\subseteq \Big(\hat D\cup(-\hat D)\Big)+i\Big(\hat D\cup(-\hat D)\Big)$, where
	\bes
		\hat D=\{(x_1,...,x_n)\in\Rn:|x\cdot u|\leq 1\ \text{for all}\ u\in D^\circ, x_n\geq 0\}.
	\ees
 But, 
		\bes 
		\hat D\subseteq \overline{D}\cap\{x\in\rl^n:x_n\geq 0\}\subseteq \overline{D|_v},
		\ees
and $\vol(D|_v)=\de$. Thus, recalling \eqref{eq_kob}, 
	\bes 
		K_D(x)=K_\Om(w,w)\geq \frac{1}{\vol_{\Cn}(I_\Om(w))}\geq \left(2\right)^{-2n}(2\de)^{-2}.
	\ees
Therefore, $D_\de\supseteq D^{\ell_n\de^{-2}}$, where $\ell_n:=\dfrac{1}{4^{n+1}}$. This completes the proof of Theorem \ref{thm_main}.

\qed

\section{A new affine invariant and some examples}\label{sec_exam}

It is unlikely that the values of $\ell_n$ and $u_n$ computed above are optimal. For one, John's theorem on L{\"o}wner-John ellipsoids can be replaced by results that utilize other centrally-symmetric bodies, perhaps yielding better bounds. However, we believe optimal bounds can be obtained if we restrict ourselves to certain classes of convex bodies. Before we support this claim with some computations, we associate a new quantity $\theta_D$ to a convex body. 

\begin{definition}
Suppose $D\subset\Rn$ is a convex body. Let
		\beas
			\ell_D&:=&\liminf_{\de\rightarrow 0}\Big(\sup\{\ell>0: D^{\ell/\de^{2}}\subseteq D_\de\}\Big);\\
			u_D&:=&\limsup_{\de\rightarrow 0}\Big(\inf\{u>0: D_\de\subseteq D^{u/\de^{2}}\}\Big);\\
			\theta_D&:=&\frac{\ell_D}{u_D}.
		\eeas
\end{definition}

We establish some properties of $\theta_D$. 
\begin{Prop}\label{prop_theta} For a convex body $D\subset\Rn$,
\begin{enumerate}
\item $\dfrac{\pi^{n}}{n!n^{2n}4^{n+1}(\omega_n)^2}\leq \theta_D\leq 1$	.
\item  $\theta_D$ is affine invariant, i.e., $\theta_D=\theta_{A(D)}$ for any affine map $A$ on $\Rn$.
\end{enumerate} 
\end{Prop}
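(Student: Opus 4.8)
The plan is to derive both parts of Proposition \ref{prop_theta} almost directly from the work already done in the proof of Theorem \ref{thm_main}, together with the behaviour of the floating body and the Bergman kernel under affine maps. First I would address part (1). The proof of Theorem \ref{thm_main} shows that for every small enough $\de$ one has $D^{\ell_n\de^{-2}}\subseteq D_\de\subseteq D^{u_n\de^{-2}}$ with the \emph{explicit} dimensional constants $\ell_n=1/4^{n+1}$ and $u_n=n!n^{2n}(\omega_n)^2/\pi^n$. Hence $\ell>0$ with $D^{\ell/\de^2}\subseteq D_\de$ includes $\ell=\ell_n$ for all small $\de$, so $\sup\{\ell>0:D^{\ell/\de^2}\subseteq D_\de\}\geq\ell_n$ for small $\de$, giving $\ell_D\geq\ell_n$; symmetrically $u_D\leq u_n$. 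Therefore $\theta_D=\ell_D/u_D\geq\ell_n/u_n=\pi^n/\big(n!n^{2n}4^{n+1}(\omega_n)^2\big)$, which is the lower bound claimed. For the upper bound $\theta_D\leq 1$, the point is that one cannot have both $D^{\ell/\de^2}\subseteq D_\de$ and $D_\de\subseteq D^{u/\de^2}$ with $u<\ell$: since the sublevel sets $D^M$ are nested increasingly in $M$, these two inclusions force $D^{\ell/\de^2}\subseteq D^{u/\de^2}$, and as long as $D^{\ell/\de^2}\subsetneq D$ (true for small $\de$, as these sets exhaust $D$ and are bounded away from $\bdy D$), strict monotonicity of $K_D$ gives $u\geq\ell$. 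Taking appropriate $\liminf$/$\limsup$ yields $u_D\geq\ell_D$, i.e. $\theta_D\leq 1$. I would be slightly careful here to phrase the monotonicity comparison at a common $\de$ and only then pass to the limit, since $\ell_D$ and $u_D$ are defined by a $\liminf$ and a $\limsup$ respectively and one wants the inequality to survive the limiting operations; but the set inclusion argument is uniform in $\de$, so this is routine.

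For part (2), affine invariance, I would split an affine map $A(x)=Lx+b$ into a translation and an element $L\in\operatorname{GL}(n;\rl)$. Translations are harmless: the floating body satisfies $(D+b)_\de=D_\de+b$ directly from the definition \eqref{def_float} (hyperplanes cutting off volume $\de$ translate along), and the Bergman kernel is translation invariant, so $K_{D+b}(x+b)=K_D(x)$ and $(D+b)^M=D^M+b$; hence $\ell_{D+b}=\ell_D$, $u_{D+b}=u_D$ and $\theta_{D+b}=\theta_D$. For $L\in\operatorname{GL}(n;\rl)$ the key transformation laws are: $(L(D))_{\de'}=L(D_\de)$ where $\de'=|\det L|\,\de$ (since $L$ scales all volumes by the factor $|\det L|$, a hyperplane cuts off volume $\de$ from $D$ iff its image cuts off volume $|\det L|\de$ from $L(D)$), and for the Bergman kernel, the biholomorphism $z\mapsto Lz$ of $\Rn+iD$ onto $\Rn+iL(D)$ with constant (real) Jacobian gives the transformation rule $K_{L(D)}(Lx)=|\det L|^{-2}K_D(x)$, so $(L(D))^M = L\big(D^{|\det L|^2 M}\big)$.

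Now I would feed these two scaling laws into the definitions of $\ell_D$ and $u_D$. Writing $c=|\det L|$: the inclusion $D^{\ell/\de^2}\subseteq D_\de$ is equivalent, after applying $L$ and using the two rules, to $\big(L(D)\big)^{c^2\ell/\de^2}\subseteq \big(L(D)\big)_{c\de}$; substituting $\de'=c\de$ this reads $\big(L(D)\big)^{(c^4\ell)/\de'^2}\subseteq \big(L(D)\big)_{\de'}$. Hence the set of admissible $\ell$ for $D$ at scale $\de$ corresponds bijectively (via $\ell\mapsto c^4\ell$, wait — let me recompute: $c^2\ell/\de^2 = c^2\ell/(\de'/c)^2 = c^4\ell/\de'^2$, so the map is $\ell\mapsto c^4\ell$) to the admissible $\ell'$ for $L(D)$ at scale $\de'$; taking suprema and then $\liminf$ as $\de\to0$ (equivalently $\de'\to0$) gives $\ell_{L(D)} = c^4\,\ell_D$. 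Hmm, that would give $\theta$ invariance only if the same factor appears for $u_D$ — and indeed the identical computation with the inclusion $D_\de\subseteq D^{u/\de^2}$ gives $u_{L(D)}=c^4\,u_D$. Therefore $\theta_{L(D)}=\ell_{L(D)}/u_{L(D)}=\ell_D/u_D=\theta_D$. Combined with translation invariance, $\theta$ is invariant under all affine maps. The main obstacle I anticipate is getting the exponents in the Bergman transformation rule and the floating-body scaling exactly right and consistently bookkeeping the substitution $\de'=c\de$ through the $\de^{-2}$ normalizations — a sign or exponent slip there would be the easiest way to break the argument — but conceptually nothing deep is required beyond the change-of-variables formula for the Bergman kernel and the elementary scaling of volume-cutting hyperplanes.
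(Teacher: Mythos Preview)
Your approach is essentially the same as the paper's: part (1) follows immediately from the explicit constants $\ell_n$, $u_n$ in Theorem \ref{thm_main} together with the trivial inequality $\ell_D\le u_D$, and part (2) from the transformation laws of $D_\de$ and $D^M$ under affine maps. The paper's own proof is just as short.

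One correction to your bookkeeping in part (2), which is exactly the slip you predicted. From your (correct) rule $(L(D))^M=L\big(D^{c^2 M}\big)$ one gets $L(D^{N})=(L(D))^{N/c^2}$, so applying $L$ to $D^{\ell/\de^2}\subseteq D_\de$ yields
\[
(L(D))^{\ell/(c^2\de^2)}\subseteq (L(D))_{c\de},
\]
not $(L(D))^{c^2\ell/\de^2}\subseteq(L(D))_{c\de}$. With $\de'=c\de$ this becomes $(L(D))^{\ell/\de'^2}\subseteq(L(D))_{\de'}$, so the correspondence is $\ell\mapsto\ell$, not $\ell\mapsto c^4\ell$. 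Thus in fact $\ell_{L(D)}=\ell_D$ and $u_{L(D)}=u_D$ individually, which is exactly what the paper states (and uses later, in the proof of Proposition \ref{prop_poly}, where $\ell_{\wt T}=\ell_T$ is invoked). Your erroneous $c^4$ factors happen to cancel in $\theta_D$, so the conclusion for this proposition survives, but the intermediate claim is wrong and would cause trouble if quoted elsewhere.
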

\begin{proof} $(1)$ The upper bound on $\theta_D$ follows from the fact that $\ell_D\leq u_D$, by definition. The lower bound is a consequence of Theorem \ref{thm_main}, where we have essentially shown that $\ell_D\geq{1}/{4^{n+1}}$ and $u_D\leq n!n^{2n}(\omega_n)^2/\pi^n$.   

$(2)$ The affine invariance of $\theta_D$ follows from that of $\ell_D$ and $u_D$, which, in turn, is a consequence of the transformation properties of $D_\de$ and $D^M$ under affine maps. More concretely, if $A:\Rn\rightarrow\Rn$ is an affine map and $H$ is a hyperplane that cuts off a set of volume $\de$ from $D$, then the hyperplane $A(H)$ cuts off a set of volume $|\det(A)|\de$ from $A(D)$. Therefore, 
	\be\label{eq_floattran} 
		A(D_\de)=A(D)_{|\det A|\de}, \qquad\text{for all}\ \de>0.
	\ee
Now, let $\Om:=\Rn+iD$ and $A_\C$ be the map $z\mapsto Az$. Then, $A_\C:\Cn\rightarrow\Cn$ is a biholomorphic map with $\operatorname{Jac}_\C A_\C=\det A$, where $\operatorname{Jac}_\C$ denotes the complex Jacobian. We use the well-known fact that the Bergman kernel of $\Om$ satisfies
	\bes
		K_{A_\C(\Om)}(A_\C(z),A_\C(z))|\operatorname{Jac}_\C(A_\C)|^2=K_\Om(z,z),\qquad \text{for all}\ z\in \Om.
	\ees
Hence, 
	\be\label{eq_bergtran}
		A(D^M)= A(D)^{M/|\det A|^2}.
	\ee
Combining \eqref{eq_floattran} and \eqref{eq_bergtran}, we see that if $D^{\ell/\de^2}\subseteq D_\de\subseteq D^{u/\de^2}$, then $A(D)^{\ell/(|\det A|\de)^2}\subseteq D_{|\det A|\de}\subseteq D^{u/(|\det A|\de)^2}$. Hence, the affine invariance of $\ell_D$, $u_D$ and $\theta_D$. 
\end{proof}

We now compute some examples to indicate the extent to which $\theta_D$ distinguishes convex domains. 

\begin{Prop}\label{prop_strcvx}
If $D$ is strongly convex --- i.e., the second fundamental form on $\bdy D$ is positive definite everywhere on $\bdy D$ --- then, $\theta_D=1$. 
\end{Prop}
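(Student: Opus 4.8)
The plan is to show that for strongly convex $D$, the two families $\{D_\de\}$ and $\{D^M\}$ are each sandwiched between the same curvature-normalized ellipsoidal neighborhoods of $\bdy D$, with matching leading constants, forcing $\ell_D = u_D$. Since $\theta_D$ is affine invariant by Proposition \ref{prop_theta}(2), it suffices to prove $\theta_D=1$; and since Theorem \ref{thm_main} already gives $\theta_D \leq 1$, the real task is the reverse inequality $\ell_D \geq u_D$, i.e. a sharp asymptotic matching.

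First I would recall the classical Sch\"utt--Werner asymptotics for the floating body of a strongly convex body: as $\de \to 0$, the hyperplane cutting off volume $\de$ in direction $v$ lies at distance $\sim c_n \, \de^{2/(n+1)} \kappa(p_v)^{-1/(n+1)}$ from the tangent plane at the boundary point $p_v$ with outer normal $v$, where $\kappa$ denotes the Gauss--Kronecker curvature and $c_n$ is an explicit dimensional constant coming from the volume of a cap of a paraboloid. Equivalently, $\bdy D_\de$ converges, after the standard parabolic rescaling at each boundary point, to the floating-body "level set" of the osculating paraboloid, uniformly in $v$ by compactness of $\bdy D$ and continuity/positivity of $\kappa$. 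Next I would establish the analogous statement for the Bergman sublevel sets: near a boundary point of the strongly pseudoconvex tube domain $\Om = \Rn + iD$, H\"ormander's sharp Bergman kernel asymptotics give $K_\Om(ix,ix) \sim C_n \, \kappa(p_v)^{1/2} \, \mathrm{dist}(x, \bdy D)^{-(n+1)}$ (the Fefferman-measure normalization already invoked in the introduction is exactly this), again uniformly over $\bdy D$ by compactness. From these two asymptotics one reads off that $D_\de = D^{M(\de)}$ up to an error that is lower order in the rescaled picture, with $M(\de) \sim (\text{const}) \, \de^{-2}$ and a single universal constant independent of $D$; comparing with Theorem \ref{thm_main}'s bracketing then yields $\ell_D = u_D$, hence $\theta_D = 1$.

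To make the sandwich rigorous I would argue as follows. Fix $\eps > 0$. By uniform continuity of $\kappa$ and the two asymptotic expansions, there is $\de_0$ such that for $\de < \de_0$ and every $v \in S^{n-1}$, the point of $\bdy D_\de$ associated to $v$ lies within $(1\pm\eps)$ of its paraboloid-model prediction, and likewise for the point of $\bdy D^{M}$ with $M = u_D^{\,-1}\!\cdot$(appropriate power) --- here I match powers so that the paraboloid models coincide. This gives $D^{(u_D+\eps)/\de^2} \subseteq D_\de \subseteq D^{(u_D-\eps)/\de^2}$ for $\de$ small, whence $\ell_D \geq u_D - \eps$ for all $\eps$, so $\ell_D \geq u_D$. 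Combined with $\ell_D \leq u_D$ from the definition, $\theta_D = 1$.

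The main obstacle is controlling the two asymptotic regimes \emph{uniformly} over the entire boundary and extracting the leading constants in a directly comparable form: the floating-body expansion is governed by the Euclidean volume of a paraboloid cap, while the Bergman expansion is governed by H\"ormander's boundary asymptotics for the kernel of the tube domain, and one must verify that after the common parabolic rescaling these produce the \emph{same} normalized limiting body (both being determined solely by the osculating quadric). Strong convexity is exactly what guarantees $\kappa$ is bounded away from $0$ and $\infty$, so that error terms in both expansions are uniformly subordinate; this is the point the introduction flags as requiring "Sch\"utt--Werner or H\"ormander-type estimates relating these sets to the curvature of $\bdy D$." Once the uniform two-sided comparison of each family with the osculating-paraboloid model is in hand, the conclusion $\theta_D = 1$ is immediate.
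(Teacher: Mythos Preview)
Your approach is essentially the paper's own: combine the Sch\"utt--Werner cap asymptotics $\Delta(x,\de)^{n+1}/\de^2 \to c_n\kappa(x)$ with H\"ormander's boundary expansion $\operatorname{dist}(x,\bdy D)^{n+1}K_D(x)\to c_n'\kappa(x)$, obtain a single dimensional constant $a_n$ with $\de^2 K_D\big|_{\bdy D_\de}\to a_n$ uniformly (using that every point of $\bdy D_\de$ is the barycenter of a volume-$\de$ section, by \cite{ScWe94}), and conclude $D^{(a_n-\eps)\de^{-2}}\subset D_\de\subset D^{(a_n+\eps)\de^{-2}}$, hence $\theta_D\geq (a_n-\eps)/(a_n+\eps)$.

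Two small slips to fix: in your sandwich you wrote $u_D$ where you want the explicit constant $a_n$ (otherwise the logic is circular), and your $\pm\eps$ are swapped --- since $M\mapsto D^M$ is increasing, the smaller level must go on the inside, i.e.\ $D^{(a_n-\eps)/\de^2}\subseteq D_\de\subseteq D^{(a_n+\eps)/\de^2}$.
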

\begin{proof} We begin with some notation (see Figure \ref{fig_scheme}). For $x\in\bdy D$, let $N(x)$ be the unique outer unit normal to $\bdy D$ at $x$, and $H(x)=\{y\in\Rn:y\cdot N(x)=x\cdot N(x)\}$. For $\de>0$, let $\Delta(x,\de)$ denote the width of the slice of volume $\de$ cut off by a hyperplane $H(x,\de)$ perpendicular to $N(x)$ --- i.e.,
\bes
	\vol\{y\in D:y\cdot N(x)>x\cdot N(x)-\Delta(x,\de)\}=\de. 
\ees 
and
\beas
	H(x,\de)&=&\{y\in\Rn:y\cdot N(x)=x\cdot N(x)-\Delta(x,\de)\}\\&=&H(x)-\Delta(x,\de)N(x).
\eeas
Let $x^\de$ denote the barycenter of $H(x,\de)\cap D$. 

\begin{figure}[H]
\centering
\resizebox{3in}{!}{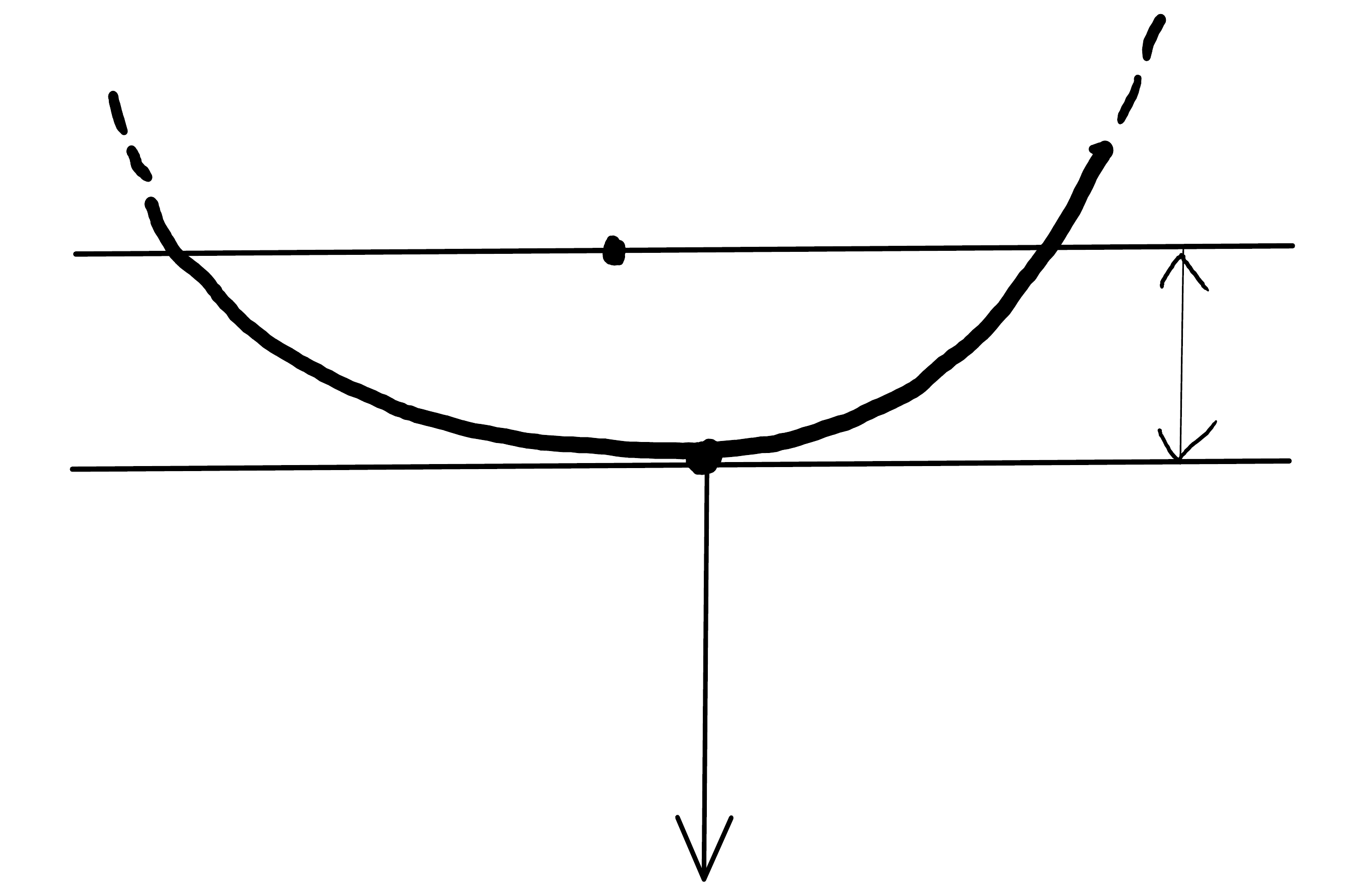}
\caption{}\label{fig_scheme}
\end{figure}

Now --- as $\theta_D$ is affine invariant --- for a fixed $x_0\in\bdy D$, we can choose affine co-ordinates, so that $x_0$ is the origin, the outer unit normal $N(x_0)=(0,...,0,-1)$ and $H(x_0,\de)=\{(x',y):y=\Delta(x_0,\de)\}$, where $x'=(x_1,...,x_{n-1})$. There is a neighborhood $U_0$ of $x_0$ such that $U_0\cap D=\{y>\phi(x')\}$, where $\phi:\rl^{n-1}\rightarrow\rl$ is a convex function of the form
	\bes
		\phi(x)=\alpha(x_1^2+\cdots+x_{n-1}^2)+ \operatorname{h.o.t.},	
	\ees
for some $\alpha>0$. Thus, each $H(x_0,\de)\cap D$ satisfies the equation
	\bes 
		\Delta(x_0,\de)=\alpha(x_1^2+\cdots+x_{n-1}^2)+ \operatorname{h.o.t.}
	\ees
in the hyperplane $y=\Delta(x_0,\de)$. So, we may estimate the the barycenter of $H(x_0,\de)\cap D$ as 
\bes 
	x_0^\de=(o(\sqrt{\Delta(x_0,\de)}),...,o(\sqrt{\Delta(x_0,\de)}),\Delta(x_0,\de))\qquad \text{as}\ \de\rightarrow 0.
\ees 
Thus, minimizing $\operatorname{dist}(x_0^\de,z	)$ over all $z\in\bdy\Om$, we obtain that
\be\label{eq_bary}
\lim_{\de\rightarrow 0}\frac{\Delta(x_0,\de)}{\operatorname{dist}(x_0^\de,\bdy D)}=1.
\ee
Moreover, using Dupin indicatrices (see \cite[Lemma 10]{ScWe90}), it is known that
	\be\label{eq_scwe}
		\lim_{\de\rightarrow 0}\frac{\Delta(x_0,\de)^{n+1}}{\de^2}=\frac{1}{2^{n+1}}\left(\frac{n+1}{\omega_{n-1}}\right)^2\kappa(x_0),
	\ee
where $\kappa$ is the Gaussian curvature function of $\bdy D$. Lastly, since $\Om$ is strongly convex, $\Om=\Rn+iD$ is strongly pseudoconvex. Thus, by H{\"o}rmander's estimate (in \cite{Ho65}), we have that 	
	\be\label{eq_horm}
		\lim_{x\rightarrow x_0\in\bdy D}\operatorname{dist}(x,\bdy D)^{n+1}K_D(x)=\frac{n!}{(4\pi)^n}\kappa(x_0). 
	\ee	
Since, $\lim_{\de\rightarrow 0}x_0^\de=x_0$, we can combine \eqref{eq_bary}, \eqref{eq_scwe} and \eqref{eq_horm} to obtain that 
	\bes 
		\lim_{\de\rightarrow 0}\de^2 K_D(x_0^\de)=\frac{n!2^{n+1}}{(4\pi)^n}\left(\frac{\omega_{n-1}}{n+1}\right)^2=:a_n.
	\ees
Hence, $(x_0,\de)\mapsto \de^2K_D(x_0^\de)$ extends to a (uniformly) continuous function on $\bdy D\times [\hat\de,0]$. So, given $\eps>0$, there is a $\de_\eps>0$ such that for $\de<\de_\eps$, 
	\bes
		\frac{a_n-\eps}{\de^2}	<K_D(x^\de)<\frac{a_n+\eps}{\de^2},	\qquad\ \text{for all}\ x\in\bdy D.
	\ees	
According to Lemma $2$ in \cite{ScWe94}, each $y\in\bdy D_\de$ is the barycenter $x^\de$ of some $H(x,\de)\cap D$. Therefore, for $\de<\de_\eps$,
	\bes 
		D^{(a_n-\eps)\de^{-2}}\subset D_\de\subset D^{(a_n+\eps)\de^{-2}}.
	\ees 
Thus,  
	\bes
			\theta_D
				=\frac{\liminf_{\de\rightarrow 0}\sup\{\ell>0:  D^{\ell/\de^{2}}\subseteq D_\de\}}
						{\limsup_{\de\rightarrow 0}\inf\{u>0: D_\de\subseteq D^{u/\de^{2}}\}}
				\geq \frac{a_n-\eps}{a_n+\eps}
						.
		\ees
Since $\eps>0$ is arbitrary, and $\theta_D\leq 1$, our claim follows. 
\end{proof}

We contrast the above example with the next one, where the Gaussian curvature of the boundary vanishes on a large part of it. 

\begin{Prop}\label{prop_poly}
Let $D\Subset\rl^2$ be a triangle or a parallelogram. Then, $\theta_D={4}/{\pi^2}$.
\end{Prop}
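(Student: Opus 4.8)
The plan is to compute $\ell_D$ and $u_D$ directly and to show that $\ell_D=\tfrac1{4\pi^2}$ and $u_D=\tfrac1{16}$ in both cases. Since $\theta_D$ is affine invariant (Proposition~\ref{prop_theta}(2)), one may take $D$ to be the unit square $Q=(0,1)^2$ in the parallelogram case and the standard simplex $T=\{(x_1,x_2):x_1,x_2>0,\ x_1+x_2<1\}$ in the triangle case; moreover the affine automorphisms of $T$ act transitively on its vertices and on its edges, so for $T$ it suffices to analyze the geometry near the right‑angle vertex $(0,0)$ and near the midpoint of the side $\{x_2=0\}$. The first step is a reformulation: since the Bergman sublevel sets $\{K_D<c\}$ form an increasing family of convex bodies exhausting $D$ while $\log K_D$ is strongly convex, one has $\sup\{\ell:D^{\ell/\de^2}\subseteq D_\de\}=\de^2\min_{\bdy D_\de}K_D$ (the infimum of $K_D$ over $D\setminus D_\de$ being attained on its inner boundary) and $\inf\{u:D_\de\subseteq D^{u/\de^2}\}=\de^2\max_{\bdy D_\de}K_D$ (by the maximum principle). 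Hence $\ell_D=\liminf_{\de\to0}\min_{\bdy D_\de}\de^2K_D$ and $u_D=\limsup_{\de\to0}\max_{\bdy D_\de}\de^2K_D$, and the task reduces to controlling $\de^2K_D$ along $\bdy D_\de$.

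For the square, $\Om=S\times S$ with $S=\{0<\ima z<1\}$, so $K_Q(x)=\kappa(x_1)\kappa(x_2)$, where, transporting $S$ to the upper half‑plane by $z\mapsto e^{\pi z}$ and using $K_{\bbh}(w,w)=\big(4\pi(\ima w)^2\big)^{-1}$,
\be\label{eq_strip}
\kappa(t):=K_S(it,it)=\frac{\pi}{4\sin^2\pi t},\qquad 0<t<1.
\ee
A direct analysis of $Q_\de$ shows it is bounded by four hyperbolic arcs, one cut off at each corner — near $(0,0)$ the arc is $\{x_1x_2=\de/2\}$, truncated between the midpoints of the two incident sides — and that consecutive arcs meet at the side‑midpoints, at distance $\de$ from $\bdy Q$. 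Along each arc, writing $m=\max(x_1,x_2)$, \eqref{eq_strip} gives $\de^2K_Q=\psi(m)+o(1)$ as $\de\to0$, with $\psi(t)=t^2/(4\sin^2\pi t)$ on $(0,\tfrac12]$. Since $\psi$ is increasing with $\psi(0^+)=\tfrac1{4\pi^2}$ and $\psi(\tfrac12)=\tfrac1{16}$, the minimum of $\de^2K_Q$ over $\bdy Q_\de$ tends to $\tfrac1{4\pi^2}$ (attained near the vertices, where $m\to0$) and the maximum to $\tfrac1{16}$ (attained near the side‑midpoints, where $m\to\tfrac12$). Therefore $\ell_Q=\tfrac1{4\pi^2}$, $u_Q=\tfrac1{16}$, and $\theta_Q=4/\pi^2$.

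For the triangle I would import the square computation through two comparisons. (i) Since $T$ lies in the open first quadrant, the tube $\RR+iT$ is contained in $\bbh\times\bbh$; using that the Kobayashi indicatrix of $\bbh$ at $iy$ is the disc $\{|\zeta|<2y\}$ (so $I_{\bbh\times\bbh}(ix)$ is a bidisc of $\CC$‑volume $16\pi^2x_1^2x_2^2$), B{\l}ocki's estimate \eqref{eq_kob} together with monotonicity of the indicatrix under inclusion gives $K_T(x)\ge\vol_{\CC}\big(I_{\bbh\times\bbh}(ix)\big)^{-1}=\big(16\pi^2x_1^2x_2^2\big)^{-1}$ for every $x\in T$. (ii) Since $T\subseteq Q$, monotonicity of the Bergman kernel and of the floating‑body construction gives $K_T\le K_Q$ on $T$ and $T_\de\subseteq Q_\de$; combined with (i) and $K_Q(x)=\kappa(x_1)\kappa(x_2)$, this yields $K_T(x)=\big(16\pi^2x_1^2x_2^2\big)^{-1}(1+o(1))$ as $x\to(0,0)$. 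Now $u_T\le\limsup_{\de\to0}\max_{Q_\de}\de^2K_Q=u_Q=\tfrac1{16}$ by (ii); and the point of $\bdy T_\de$ lying over the midpoint $(\tfrac12,0)$ sits at height $\de(1+o(1))$, so by $K_T\ge K_Q$ and $\kappa(\tfrac12)=\tfrac\pi4$ one gets $\de^2K_T\ge\tfrac1{16}+o(1)$ at that point; hence $u_T=\tfrac1{16}$.

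For $\ell_T$: a chord of a triangle always cuts off a triangle, so each connected component of $T\setminus T_\de$ is a triangle of area $\de$ with one vertex at a vertex $p$ of $T$; in affine coordinates $y=A_p(x)$ centred at $p$ that straighten the two sides at $p$ onto the positive axes, the AM–GM inequality puts this component inside $\{y_1y_2<|\det A_p|\de/2\}$, and there, by (i) applied in the $y$‑coordinates, $K_T(x)=|\det A_p|^2K_{A_p(T)}(y)\ge|\det A_p|^2\big(16\pi^2y_1^2y_2^2\big)^{-1}>\big(4\pi^2\de^2\big)^{-1}$. Thus $\de^2K_T>\tfrac1{4\pi^2}$ on all of $T\setminus T_\de$, so $\ell_T\ge\tfrac1{4\pi^2}$; and the innermost point $(\sqrt{\de/2},\sqrt{\de/2})$ of the arc cut off at $(0,0)$ lies on $\bdy T_\de$ and, by the asymptotics in (ii), makes $\de^2K_T\to\tfrac1{4\pi^2}$, so $\ell_T=\tfrac1{4\pi^2}$. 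Hence $\theta_T=\ell_T/u_T=4/\pi^2$, completing the proof. The step I expect to require the most care is the precise local description of $\bdy D_\de$ near a vertex and near a side‑midpoint — confirming that the innermost arc point and the over‑the‑midpoint point are genuine boundary points of $D_\de$ and locating them up to $o(\de)$ — together with the sharp two‑sided control of $K_T$ near the flat edges of $T$, where the naive ``tube over a half‑space'' model has trivial Bergman space and one must instead squeeze $K_T$ between the Bergman kernels of coordinate rectangles $R$ with $T\subseteq R$ and $R'\subseteq T$.
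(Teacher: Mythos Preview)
Your treatment of the square $Q$ is correct and matches the paper. The problem is the triangle: your step (ii) reverses the monotonicity of the Bergman kernel. From $T\subset Q$ one gets $K_T\ge K_Q$ on $T$, not $K_T\le K_Q$ (the smaller domain has the \emph{larger} diagonal Bergman kernel). You use the correct direction a few lines later for the lower bound $u_T\ge\tfrac1{16}$, but your upper bound ``$u_T\le\limsup_{\de\to0}\max_{Q_\de}\de^2K_Q=u_Q$'' collapses: having $\partial T_\de\subset Q_\de$ and control of $K_Q$ there says nothing about $K_T$ in the needed direction. Likewise, the asymptotic $K_T(x)\sim(16\pi^2x_1^2x_2^2)^{-1}$ near the vertex is only half proved---you have the lower bound from B{\l}ocki, but the claimed upper bound came from the false inequality. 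The vertex asymptotic is easily repaired by comparing with a square \emph{inside} $T$ (e.g.\ $(0,\tfrac12)^2\subset T$ gives $K_T\le K_{(0,1/2)^2}$), which salvages $\ell_T\le\tfrac1{4\pi^2}$. But $u_T\le\tfrac1{16}$ is a genuine gap: you would need a sharp upper bound on $K_T$ uniformly along all of $\partial T_\de$, and inscribed rectangles only give this pointwise with constants that degenerate as the rectangle is forced to be thin. Your closing caveat anticipates trouble near the edges, but the difficulty is global.

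The paper sidesteps this entirely by never estimating $K_T$. It sandwiches $T\subset S\subset\wt T=2T$, checks that the relevant arc $C_\de$ of $\partial S_\de$ satisfies $C_\de\subset T_{\de-2\de^2}$ and $C_\de\subset\partial\wt T_\de$, and then uses Bergman monotonicity in the \emph{correct} direction ($K_{\wt T}\le K_S\le K_T$ on $C_\de$) together with the definitions of $\ell_T,u_T,\ell_{\wt T}=\ell_T$ to deduce $\ell_S\ge\ell_T$ and $u_S\le u_T$; a symmetric argument with the roles of $S$ and $T$ swapped gives the reverse inequalities. Thus $\ell_T=\ell_S$ and $u_T=u_S$, and only $\theta_S$ needs to be computed. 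Your lower bound $\ell_T\ge\tfrac1{4\pi^2}$ via B{\l}ocki and AM--GM on cut-off triangles is a nice alternative to this piece, but for $u_T$ you will need either the paper's transfer trick or a substantially more delicate direct estimate.
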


\begin{proof} As all planar triangles and parallelograms are affine images of the triangle $T=\{(x,y)\in\rl^2:x>0,y>0,x+y<1\}$
 and the square $S:=(0,1)\times (0,1)$, respectively, it suffices to show that $\ell_S=\ell_T$, $u_S=u_T$ and $\theta_S=4/\pi^2$. We take this approach as it is hard to directly compute $\theta_T$.

We start with a description of the floating body of $S$. For small enough $\de>0$, the boundary of $S_\de$ is a piecewise smooth curve, each smooth piece of which is a part of a hyperbola (see Figure \ref{fig_sqfloat}). Specifically, 
	\bes
		S_\de=\left\{(x,y)\in\rl^2:\min\Big(xy,(1-x)y,x(1-y),(1-x)(1-y)\Big)>\de/2\right\}.
	\ees 
\begin{figure}[H]
\centering
\resizebox{2.2in}{!}{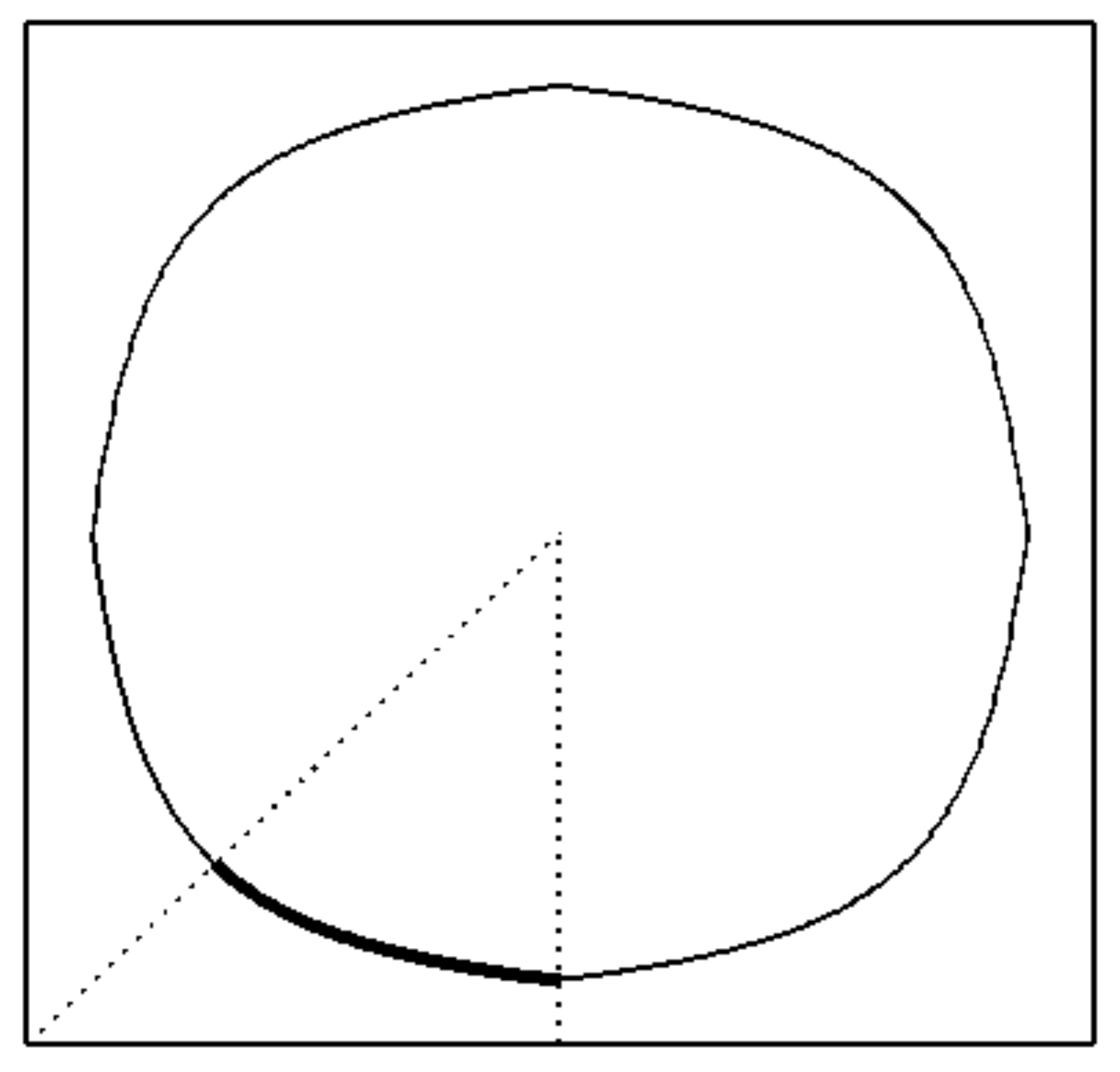}
\caption{A convex floating body for $S$.}
\label{fig_sqfloat}
\end{figure}
Due to the eight-fold symmetry of $S$, we will focus on the one-eighth part of the boundary given by $C_\de:=\bdy S_\de\cap \{(x,y):0\leq y\leq x\leq 1/2\}$ (thickened in Figure \ref{fig_sqfloat}). For $\de<<1/2$, $C_\de$ can be parametrized as
	\bes 
		t\mapsto c(t):=\left(t,\frac{\de}{2t}\right),\qquad\ \sqrt{\frac{\de}{2}}\leq t\leq \frac{1}{2}.
	\ees
To estimate $K_S$ on $C_\de$, we observe that 
	\bes 
		C_\de\subset T\subset S\subset \wt T\qquad \text{(see Figure \ref{fig_sqtri})},
	\ees
where $\wt T$ is the image of $T$ under the map $(x,y)\mapsto (2x,2y)$. 
\begin{figure}[H]
\centering
\resizebox{2.2in}{!}{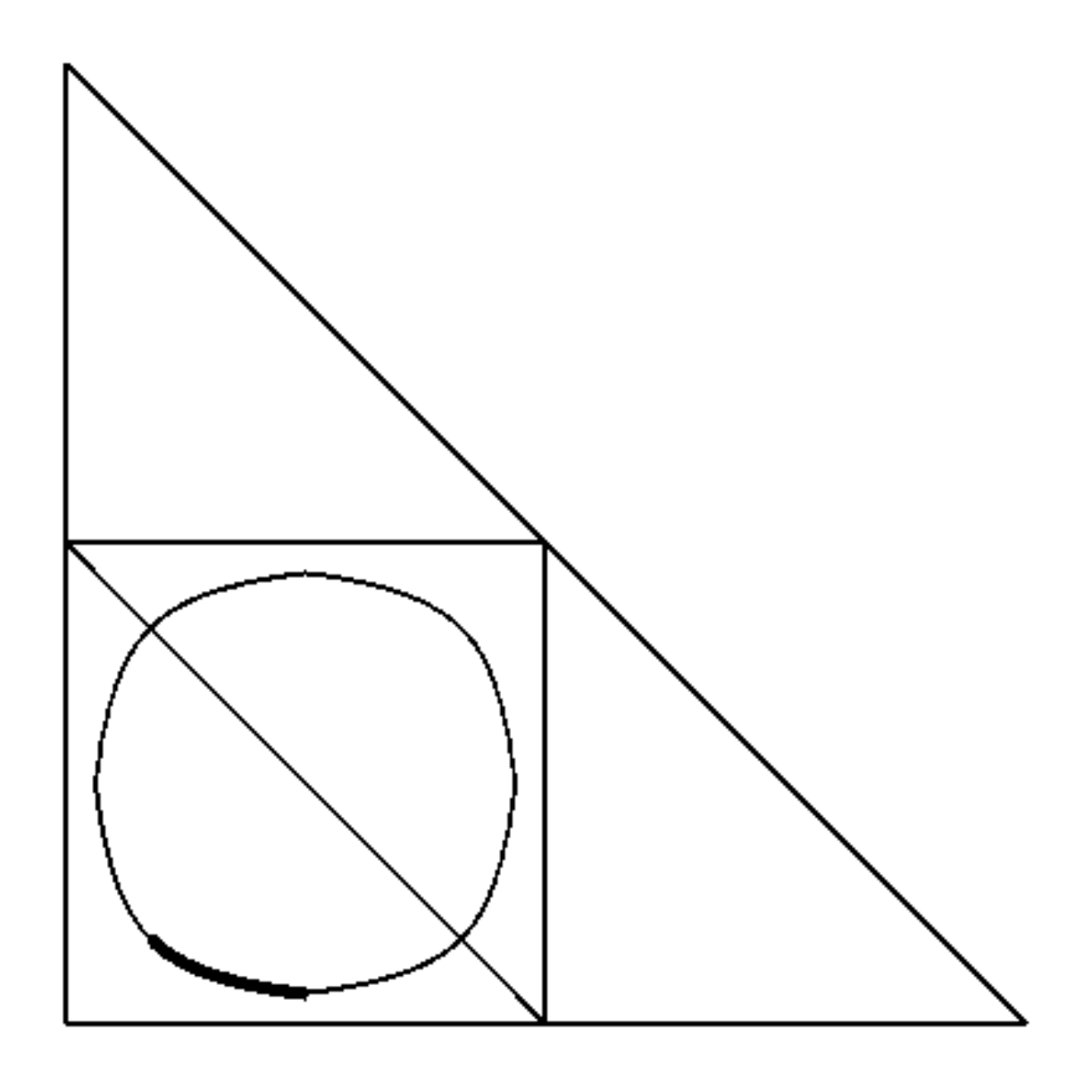}
\caption{}
\label{fig_sqtri}
\end{figure}

The descriptions of the floating bodies of $T$ and $\wt T$ are also needed:
\beas
		T_\de&=&\left\{(x,y)\in\rl^2:\min\Big(xy,(1-x-y)y,(1-x-y)x\Big)>\de/2\right\};\\
		\wt T_\de&=&\left\{(x,y)\in\rl^2:\min\Big(xy,(2-x-y)y,(2-x-y)x\Big)>\de/2\right\}.
	\eeas
These explicit descriptions allow us to conclude that, for $\de<<1/2$, 
	\be\label{eq_sqtri}
	 C_\de\subset T_{\de-2\de^2}\quad \text{and}\quad  C_\de\subset \bdy\wt T_\de\subset \wt T\setminus \wt T_{\de+2\de^2}.
	\ee

Now fix an arbitrary $\eps>0$. Then, for small enough $\de$,
\begin{enumerate}
\item $(1-\eps)\de<\de-2\de^2$ and $\de+2\de^2<(1+\eps)\de$; and 
\item $T_\de\subseteq T^{(1+\eps)u_T\de^{-2}}$ and $\wt T^{(1-\eps)\ell_T\de^{-2}}\subseteq \wt T_\de$
\end{enumerate} 
The latter follows from the definitions of $\ell_D$ and $u_D$, and the fact that $\ell_{\wt T}=\ell_T$ due to affine invariance (established in the proof of Proposition \ref{prop_theta}). We combine \eqref{eq_sqtri}, $(1)$, the montonicity of $T_\de$ and $\wt T_\de$, and $(2)$ to conclude that: 
	\bes
		C_\de\subset T_{\de-2\de^2}\subset T_{(1-\eps)\de}\subset T^{(1+\eps)u_T(1-\eps)^{-2}\de^{-2}}
	\ees
and
		\bes
		C_\de\subset \wt T\setminus \wt T_{\de+2\de^2}\subset \wt T\setminus \wt T_{(1+\eps)\de}\subset \wt T\setminus \wt T^{(1-\eps)\ell_T(1+\eps)^{-2}\de^{-2}}.
	\ees
Thus, for all $c\in C_\de$,
	\bes 
		K_T(c)< \frac{(1+\eps)u_T}{(1-\eps)^2\de^2}\quad \text{and }\quad K_{\wt T}(c)>\frac{(1-\eps)\ell_T}{(1+\eps)^2\de^2}.
	\ees
So, by the montonocity of the Bergman kernel, 
	\bes 
		\frac{(1-\eps)\ell_T}{(1+\eps)^2\de^2}<K_{\wt T}(c)<K_S(c)<K_T(c)< \frac{(1+\eps)u_T}{(1-\eps)^2\de^2}.
	\ees
As $\eps>0$ was arbitrarily chosen, and the estimates on $C_\de$ transfer to $\bdy S_\de$ due to symmetry,
	\bes
		S^{\ell_T\de^{-2}}\subset S_\de\subset S^{u_T\de^{-2}}. 
	\ees
Thus, $u_S\leq u_T$, $\ell_S\geq \ell_T$ and, consequently, $\theta_S\geq \theta_T$. An analogous computation can be executed after switching the roles of $S$ and $T$ to obtain that $\theta_T\geq \theta_S$, thus yielding the desired equality. It now suffices to compute $\theta_S$. 

We use \eqref{eq_berg} to compute the Bergman kernel of $\rl^2+iS$ at any point $(x,y)\in S$:
	\bes	
		K_S\big((x,y)\big)=\frac{\pi^2}{16}\csc^2(\pi x)\csc^2(\pi y).
	\ees
Once again, we can exploit the symmetry of $S$ to obtain that 
	\beas 
		\ell_S&=&\lim_{\de\rightarrow 0}\inf_{c\in C_\de}K_S(c)\de^2
				=\lim_{\de\rightarrow 0}\inf_{t\in[\sqrt{\de/2},1/2]}\frac{\pi^2\de^2}{16}\csc^2(\pi t)\csc^2\left(\frac{\pi\de}{2t}\right)=\frac{1}{4\pi^2};\\
		u_S&:=&\lim_{\de\rightarrow 0}\sup_{c\in C_\de}K_S(c)\de^2
				=\lim_{\de\rightarrow 0}\sup_{t\in[\sqrt{\de/2},1/2]}\frac{\pi^2\de^2}{16}\csc^2(\pi t)\csc^2\left(\frac{\pi\de}{2t}\right)=\frac{1}{16}.
	\eeas
Therefore, $\theta_T=\theta_S=4/\pi^2$. 
\end{proof}
We strongly suspect that $\theta_D=1$ completely characterizes strongly convex bodies, and that Proposition \ref{prop_poly} can be extended to all planar convex polygons. In fact, we believe that, for $n=2$, these represent the two extremes of the range of values for $\theta_D$ (this would improve the first part of Proposition \ref{prop_theta}). Furthermore, it is likely that using the almost polygonal bodies constructed in \cite{ScWe92} one can construct planar convex bodies with any prescribed value of $\theta_D$ in the interval $(4/\pi^2,1)$.   

\bibliography{tubedomains}

\begin{thebibliography}{10}

\bibitem{Ba97}
Keith Ball.
\newblock An elementary introduction to modern convex geometry.
\newblock {\em Flavors of Geometry}, 31:1--58, 1997.

\bibitem{Bl14}
Zbigniew B{\l}ocki.
\newblock A lower bound for the {B}ergman kernel and the {B}ourgain-{M}ilman
  inequality.
\newblock In {\em Geometric Aspects of Functional Analysis}, pages 53--63.
  Springer, 2014.

\bibitem{Fr12}
Daniel Fresen.
\newblock The floating body and the hyperplane conjecture.
\newblock {\em Arch. Math.}, 98(4):389--397, 2012.

\bibitem{Fu01}
Siqi Fu.
\newblock Transformation formulas for the {B}ergman kernels and projections of
  {R}einhardt domains.
\newblock {\em Proc. Amer. Math. Soc.}, 129(6):1769--1773, 2001.

\bibitem{Gu15}
Purvi Gupta.
\newblock Lower-dimensional {F}efferman measures via the {B}ergman kernel.
\newblock {\em Contemp. Math. (Proceedings of the Conference on Analysis and
  Geometry in Several Complex Variables, Doha, Qatar, January 2015)}, to
  appear.

\bibitem{Ho65}
Lars H{\"o}rmander.
\newblock ${L}^2$ estimates and existence theorems for the operator.
\newblock {\em Acta Math.}, 113(1):89--152, 1965.

\bibitem{Jo2014}
Fritz John.
\newblock Extremum problems with inequalities as subsidiary conditions.
\newblock In {\em Traces and Emergence of Nonlinear Programming}, pages
  197--215. Springer, 2014.

\bibitem{Na12}
Fedor Nazarov.
\newblock The {H}{\"o}rmander proof of the {B}ourgain-{M}ilman theorem.
\newblock In {\em Geometric Aspects of Functional Analysis}, pages 335--343.
  Springer, 2012.

\bibitem{Oh84}
Takeo Ohsawa.
\newblock Boundary behavior of the {B}ergman kernel function on pseudoconvex
  domains.
\newblock {\em Publ. Res. Inst. Math. Sci.}, 20(5):897--902, 1984.

\bibitem{Sa88}
Saburou Saitoh.
\newblock {F}ourier-{L}aplace transforms and the {B}ergman spaces.
\newblock {\em Proc. Amer. Math. Soc.}, 102(4):985--992, 1988.

\bibitem{Sc91}
Carsten Sch{\"u}tt.
\newblock The convex floating body and polyhedral approximation.
\newblock {\em Israel J. Math.}, 73(1):65--77, 1991.

\bibitem{ScWe90}
Carsten Sch{\"u}tt and Elisabeth Werner.
\newblock The convex floating body.
\newblock {\em Math. Scand.}, 66:275--290, 1990.

\bibitem{ScWe92}
Carsten Sch{\"u}tt and Elisabeth Werner.
\newblock The convex floating body of almost polygonal bodies.
\newblock {\em Geom. Dedicata}, 44(2):169--188, 1992.

\bibitem{ScWe94}
Carsten Sch{\"u}tt and Elisabeth Werner.
\newblock Homothetic floating bodies.
\newblock {\em Geom. Dedicata}, 49(3):335--348, 1994.

\bibitem{St06}
Alina Stancu.
\newblock The floating body problem.
\newblock {\em Bull. Lond. Math. Soc.}, 38(5):839--846, 2006.

\bibitem{Zw}
W{\l}odzimierz Zwonek.
\newblock {\em Completeness, {R}einhardt domains and the method of complex
  geodesics in the theory of invariant functions}.
\newblock Polska Akademia Nauk, Instytut Matematyczny, 2000.

\end{thebibliography}
\bibliographystyle{plain}

\end{document}